\documentclass[12pt,letterpaper]{article}

\usepackage{amsmath}
\usepackage{amssymb}
\usepackage{latexsym} 
\usepackage{array}
\usepackage{enumerate}
\usepackage{amsthm}

\newtheorem{theorem}{Theorem}

\begin{document}

\title{\bf Correctness of depiction in planar diagrams of spatial figures}
\author{P. L. Robinson}
\date{}
\maketitle

\bigbreak 

This study was motivated in part by the following question: is it possible to decide whether a given planar diagram correctly depicts a given spatial figure? We do not propose to address this question in full generality or even to define exactly what it means. Instead, we shall precisely formulate a special case for which we offer a complete answer. To be specific, we pose the following \\

\noindent
{\bf Question}: Let $\pi$ be a plane in which $P_1 Q_1 R_1 S_1$ and $P_2 Q_2 R_2 S_2$ are quadrangles; assume that $P_1 P_2$, $Q_1 Q_2$, $R_1 R_2$, $S_1 S_2$ all pass through the point $O$. Is it possible to decide whether this diagram is a correct two-dimensional depiction of the three-dimensional figure comprising a quadrangle `$P_1 Q_1 R_1 S_1$' in one plane along with its quadrangular shadow `$P_2 Q_2 R_2 S_2$' in another plane as projected from a `light source' at `$O$'? \\

We shall show that it is indeed possible to make this decision, by checking a simple condition whose necessity and sufficiency follow from Theorems 1 and 2 respectively. \\

Naturally, we view this problem as belonging to the province of Projective Geometry; in particular, we accept that any two lines in the same plane have a point of intersection. We defer to the authority of Veblen and Young [2] for a classic treatment of the subject; Chapters I and II more than cover most of what we require. For a more recent account, see Coxeter [1]. \\

It will be convenient to fix some notation, to be used throughout. Let $P Q R S$ be a plane quadrangle, no three of whose vertices $P, Q, R, S$ are collinear. Its diagonal triangle $ABC$ has as vertices the points in which its opposite sides intersect: 
	\[A = SP \cdot QR, \; \; B = SQ \cdot RP, \; \; C = SR \cdot PQ. 
\]
When the vertices $P, Q, R, S$ are decorated with overlines or subscripts, the diagonal points $A, B, C$ will be decorated correspondingly. Perspectivity has the customary meaning: for instance, when the lines $A_1 A_2$, $B_1 B_2$, $C_1 C_2$ all pass through $O$ we say that the triangles $A_1 B_1 C_1$ and $A_2 B_2 C_2$ are perspective from $O$ and write 
	\[A_1 B_1 C_1 \stackrel{O}\doublebarwedge A_2 B_2 C_2 .
\]

\begin{theorem} 
Let $O$ be a point not on either of the distinct planes $\overline{\pi}$ and $\pi$. Let $\overline{P} \overline{Q} \overline{R} \overline{S}$ be a quadrangle in $\overline{\pi}$ and let $P Q R S$ be a quadrangle in $\pi$. If $\overline{P} \overline{Q} \overline{R} \overline{S} \stackrel{O}\doublebarwedge P Q R S$ then $\overline{A} \overline{B} \overline{C} \stackrel{O}\doublebarwedge A B C$.
\end{theorem}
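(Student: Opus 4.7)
My plan is to argue one diagonal point at a time: for each of $A$, $B$, $C$ I will show that $\overline{A}A$, $\overline{B}B$, $\overline{C}C$ respectively pass through $O$ by exhibiting, in each case, two distinct planes through $O$ whose line of intersection contains both the diagonal point and its overlined counterpart. Nothing beyond the basic solid-geometry fact that two distinct planes in projective $3$-space meet in a single line will be needed.

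Concretely, for $A = SP \cdot QR$ the idea runs as follows. The hypothesis places $\overline{P}$ on $OP$ and $\overline{S}$ on $OS$, so the line $\overline{S}\,\overline{P}$ lies in the plane $\Pi_1$ determined by $O, P, S$, and of course $SP$ lies there too. Symmetrically, both $QR$ and $\overline{Q}\,\overline{R}$ lie in the plane $\Pi_2$ through $O, Q, R$. Then $A$ belongs to $\Pi_1 \cap \Pi_2$ (as it lies on $SP$ and on $QR$), and so does $\overline{A}$ (as it lies on $\overline{S}\,\overline{P}$ and on $\overline{Q}\,\overline{R}$); together with $O$, the three points $O, A, \overline{A}$ are forced onto the single line $\Pi_1 \cap \Pi_2$.

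The one step I expect to need explicit care is verifying $\Pi_1 \neq \Pi_2$, so that their intersection is genuinely a line rather than a whole plane. If these planes coincided, then $P, Q, R, S$ would all lie in a common plane through $O$, hence on the line where that plane meets $\pi$ (a line, not the whole of $\pi$, since $O \notin \pi$); but this contradicts the standing assumption that no three vertices of the quadrangle $PQRS$ are collinear. With this disposed of, the same argument applied cyclically — via the planes through $O,S,Q$ and $O,R,P$ for $B = SQ \cdot RP$, and through $O,S,R$ and $O,P,Q$ for $C = SR \cdot PQ$ — yields the full perspectivity $\overline{A}\,\overline{B}\,\overline{C} \stackrel{O}\doublebarwedge A B C$.
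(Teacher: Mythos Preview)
Your proof is correct and takes essentially the same approach as the paper: both arguments hinge on the pair of planes spanned by $O$ together with each of the two opposite sides defining a given diagonal point. The paper phrases the conclusion as $O\overline{B}\cdot\pi = SQ\cdot RP = B$ (tracing the line $O\overline{B}$ down to $\pi$, a formulation that happens to sidestep your explicit check that $\Pi_1\neq\Pi_2$), but the underlying geometry is identical.
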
 

\begin{proof} 
The lines $\overline{S} S$ and $\overline{Q} Q$ meet in $O$, so the points $O, S, \overline{S}, Q, \overline{Q}$ lie on a plane, which contains the point $\overline{B}$ of $\overline{S} \overline{Q}$. Accordingly, the (coplanar) lines $O \overline{B}$ and $S Q$ meet; similarly, $O \overline{B}$ and $R P$ meet. Consequently, $O \overline{B}$ meets the plane $\pi$ in the point 
	\[O \overline{B} \cdot \pi = SQ \cdot RP = B. 
\]
In like manner, $O \overline{C} \cdot \pi = C$ and $O \overline{A} \cdot \pi = A$. We preferred to focus on $B$ since this diagonal point lies `inside' the quadrangle $P Q R S$ when it is drawn in the `obvious' way.
\end{proof} 

Thus, perspectivity of two {\it simple} quadrangles in different planes forces perspectivity of the {\it complete} quadrangles. \\

In terms of our original {\bf Question}, the preceding theorem yields perspectivity from $O$ of the diagonal triangles as a necessary condition for correctness of depiction. In order to establish that this condition is also sufficient, we must attend to obligatory special cases as usual; we prefer to frame this attention as a preparatory discussion, rather than as a formal theorem. \\

Thus, let $\pi$ be a plane in which the quadrangles $P_1 Q_1 R_1 S_1$ and $P_2 Q_2 R_2 S_2$ are perspective from a point $O$. Suppose that the quadrangles are so placed that at least one side in each opposite pair equals its homologue under the perspectivity. There are two cases to consider: 
($\Delta$) three sides that equal their homologues make up a triangle; 
($\bullet$) three sides that equal their homologues meet at a vertex. \par
Case ($\Delta$): Say $Q_1 R_1 = Q_2 R_2$, $R_1 P_1 = R_2 P_2$, $P_1 Q_1 = P_2 Q_2$. In this case, $P_1 = P_2$, $Q_1 = Q_2$, $R_1 = R_2$: in fact, if $R_1 \neq R_2$ then as $R_1$ lies on $Q_1 R_1$ and $R_2$ lies on $Q_2 R_2$ it follows that $Q_1 R_1 = Q_2 R_2 = R_1 R_2$ while $R_1 R_2 = R_1 P_1 = R_2 P_2$ follows similarly; but now $R_1 R_2 = P_1 Q_1 = P_2 Q_2$ violates non-collinearity of $P_1, Q_1, R_1$ and $P_2, Q_2, R_2$. Thus: the quadrangles have the form $P Q R S_1$ and $P Q R S_2$. \par
Case ($\bullet$): Say $S_1 P_1 = S_2 P_2$, $S_1 Q_1 = S_2 Q_2$, $S_1 R_1 = S_2 R_2$. In this case, $S_1 = S_2$: in fact, if $S_1 \neq S_2$ then $S_1 S_2 = S_1 R_1 = S_2 R_2$ and so on, whence $S_1 S_2$ passes through the non-collinear points $P_1, Q_1, R_1$ and $P_2, Q_2, R_2$. The quadrangles now have the form $P_1 Q_1 R_1 S$ and $P_2 Q_2 R_2 S$. We claim that among the pairs $(P_1, P_2), (Q_1, Q_2), (R_1, R_2)$ at most one can have distinct entries: indeed, if $P_1 \neq P_2$ and $Q_1 \neq Q_2$ then $O S = P_1 P_2 = Q_1 Q_2$ in violation of non-collinearity. Thus: the quadrangles have the form $P Q R_1 S$ and $P Q R_2 S$. \\
 
The conclusion to this preparatory discussion is that if at least one side in each opposite pair agrees with its homologue then at most one homologous pair of vertices is distinct.

\begin{theorem} 
In a plane $\pi$, let the quadrangles $P_1 Q_1 R_1 S_1$ and $P_2 Q_2 R_2 S_2$ be perspective from $O$. If their diagonal triangles $A_1 B_1 C_1$ and $A_2 B_2 C_2$ are also perspective from $O$ then there exists a quadrangle $\overline{P} \overline{Q} \overline{R} \overline{S}$ in a plane $\overline{\pi} \neq \pi$ along with points $O_1 \neq O_2$ not in either plane, such that 
	\[P_1 Q_1 R_1 S_1 \stackrel{O_1}\doublebarwedge \overline{P} \overline{Q} \overline{R} \overline{S} \stackrel{O_2}\doublebarwedge P_2 Q_2 R_2 S_2. 
\]
\end{theorem}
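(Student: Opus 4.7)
The strategy is to construct the three-dimensional lift explicitly and then verify coplanarity of the lifted vertices using the diagonal triangle hypothesis. Fix any line $\ell$ through $O$ not lying in $\pi$, and choose distinct points $O_1, O_2 \in \ell \setminus \{O\}$. For each vertex label $X \in \{P, Q, R, S\}$, the lines $O_1 X_1$ and $O_2 X_2$ both lie in the plane spanned by $\ell$ and $X_1 X_2$ (these two lines meet at $O$), so they intersect in a unique point $\bar{X}$ in three-space. By construction, $\bar{X}$ projects from $O_i$ to $X_i$, so both perspectivities in the conclusion hold automatically; what remains is to show that $\bar{P}, \bar{Q}, \bar{R}, \bar{S}$ lie in a common plane $\bar{\pi} \neq \pi$.

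This coplanarity is exactly where the diagonal perspectivity hypothesis intervenes. Form the analogous lift $\bar{A} = O_1 A_1 \cap O_2 A_2$, well-defined because $A_1 A_2$ passes through $O \in \ell$. The key claim is that $\bar{A}$ lies on both of the 3D lines $\bar{S}\bar{P}$ and $\bar{Q}\bar{R}$. Indeed, the plane through $O_1$ and the side $S_1 P_1$ also contains $\bar{S} \in O_1 S_1$, $\bar{P} \in O_1 P_1$, and $A_1 \in S_1 P_1$, hence the whole line $O_1 A_1$; the analogous plane through $O_2$ contains $\bar{S}, \bar{P}$, and $O_2 A_2$. These two planes share the pair $\bar{S}, \bar{P}$ and are generically distinct, so their intersection is precisely the line $\bar{S}\bar{P}$, which must therefore contain $\bar{A}$. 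Repeating the argument with the sides $Q_1 R_1$ and $Q_2 R_2$ places $\bar{A}$ on $\bar{Q}\bar{R}$ as well. The two 3D lines $\bar{S}\bar{P}$ and $\bar{Q}\bar{R}$ thus meet at $\bar{A}$, which forces $\bar{P}, \bar{Q}, \bar{R}, \bar{S}$ to be coplanar; let $\bar{\pi}$ denote their common plane.

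The remaining admissibility checks are routine. Since $O_1, O_2 \notin \pi$, each lifted vertex $\bar{X}$ lies off $\pi$ whenever $X_1 \neq X_2$, so $\bar{\pi} \neq \pi$; and if $O_1$ belonged to $\bar{\pi}$, then the four lines $O_1 X_1$ would all lie in $\bar{\pi}$, pushing $P_1, Q_1, R_1, S_1$ into the single line $\bar{\pi} \cap \pi$ against non-collinearity of the quadrangle. The principal obstacle is the family of degenerate configurations in which some $X_1 = X_2$ (so $\bar{X}$ collapses onto $\pi$) or in which some side $S_1 P_1$ coincides with its homologue (making the two auxiliary planes above agree). These are precisely the situations quarantined by the preparatory discussion: enough such coincidences force all but at most one pair of vertices to coincide, leaving only Case $(\Delta)$ or Case $(\bullet)$, which must be dispatched by a separate ad hoc lift.
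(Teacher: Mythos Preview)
Your construction and coplanarity argument are exactly the paper's: lift via distinct $O_1,O_2$ collinear with $O$ off $\pi$, set $\bar X=O_1X_1\cdot O_2X_2$, and force $\bar P\bar Q\bar R\bar S$ to be planar by showing that a lifted diagonal point (your $\bar A$, the paper's $\bar C$) lies on two opposite lifted sides. The only substantive divergence is in how you close out the degenerate configurations.

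There you leave a genuine gap. You write that Cases $(\Delta)$ and $(\bullet)$ ``must be dispatched by a separate ad hoc lift,'' but you supply none, and in fact none is needed: the paper shows this complementary case is \emph{impossible} under the diagonal-perspectivity hypothesis. Once the preparatory discussion has reduced the quadrangles to $PQRS_1$ and $PQRS_2$, the requirement that $A_1A_2$, $B_1B_2$, $C_1C_2$ all pass through $O$ forces at least two of the pairs $(A_1,A_2)$, $(B_1,B_2)$, $(C_1,C_2)$ to coincide (for if, say, $A_1\neq A_2$ and $B_1\neq B_2$, then $QR=A_1A_2$ and $RP=B_1B_2$ both pass through $O$, making $P,Q,R$ collinear). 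Taking $A_1=A_2$ and $B_1=B_2$, one finds $S_1,S_2\in A_1P$ and $S_1,S_2\in B_1Q$, whence $S_1S_2=PQ$, contradicting non-collinearity of $P,Q,S_1$. Thus the hedge ``generically distinct'' in your coplanarity step can always be discharged by choosing an appropriate diagonal point, and no residual case survives; your argument becomes complete once the promised ad hoc lift is replaced by this impossibility.
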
 

\begin{proof} 
Let $O_1$ and $O_2$ be distinct points collinear with $O$ but not in $\pi$. The lines $O O_1 O_2$ and $O S_1 S_2$ meet in $O$; thus $O, O_1, O_2, S_1, S_2$ are coplanar, so the lines $O_1 S_1$ and $O_2 S_2$ meet, say in the point $\overline{S} = O_1 S_1 \cdot O_2 S_2$. Define the points $\overline{P}, \overline{Q}, \overline{R}; \overline{A}, \overline{B}, \overline{C}$ analogously. No three of $\overline{P}, \overline{Q}, \overline{R}, \overline{S}$ are collinear: if $\overline{P}, \overline{Q}, \overline{R}$ were collinear, then the plane through $O_1 \overline{P} \overline{Q} \overline{R}$ would meet the (distinct) plane $\pi$ in a line containing $P_1, Q_1, R_1$ and so render these points collinear. All that remains is to see that the quadrangle $\overline{P} \overline{Q} \overline{R} \overline{S}$ lies in a plane $\overline{\pi}$ (necessarily distinct from $\pi$). Suppose that each side of some opposite pair in $P_1 Q_1 R_1 S_1$ is distinct from its homologue in $P_2 Q_2 R_2 S_2$: say $P_1 Q_1 \neq P_2 Q_2$ and $R_1 S_1 \neq R_2 S_2$. The planes $\pi_1 = O_1 P_1 Q_1$ and $\pi_2 = O_2 P_2 Q_2$ are distinct, so their intersection $\pi_1 \cdot \pi_2$ is a line. Now $C_1$ lies on $P_1 Q_1$ and $C_2$ lies on $P_2 Q_2$ so that $\overline{C} = O_1 C_1 \cdot O_2 C_2$ lies on $O_1 P_1 Q_1 \cdot O_2 P_2 Q_2 = \pi_1 \cdot \pi_2$; this line contains $\overline{P}$ and $\overline{Q}$ likewise. Accordingly, $\overline{P}, \overline{Q}, \overline{C}$ are collinear; similarly, $\overline{C}, \overline{R}, \overline{S}$ are collinear. Thus $\overline{P} \overline{Q}$ meets $\overline{R} \overline{S}$ (in $\overline{C}$) and so $\overline{P} \overline{Q} \overline{R} \overline{S}$ is indeed planar. \\

In the complementary case that at least one side in each opposite pair agrees with its homologue, the preparatory discussion prior to the theorem shows that we may take the quadrangles to have the form $P Q R S_1$ and $P Q R S_2$. In this case, if $A_1 \neq A_2$ and $B_1 \neq B_2$ then $Q R = A_1 A_2$ and $R P = B_1 B_2$ both pass through $O$; this places the non-collinear points $P, Q, R$ on the same line through $O$. It follows that among $(A_1, A_2), (B_1, B_2), (C_1, C_2)$ at least two pairs have entries that agree; say $A_1 = A_2 = A$ and $B_1 = B_2$. Now $A = S_1 P \cdot Q R = S_2 P \cdot Q R$ implies that $A P = S_1 S_2$ whence $S_1 S_2$ passes through $P$; likewise, $S_1 S_2$ passes through $Q$. The resulting equality $S_1 S_2 = P Q$ contradicts non-collinearity one last time and shows that this complementary case does not arise. 
\end{proof} 

Observe that the complete quadrangles $P_1 Q_1 R_1 S_1$ and $P_2 Q_2 R_2 S_2$ thus correspond under a perspective collineation, with centre $O$ and axis $\overline{\pi} \cdot \pi$. \\

We were careful to offer a proof of Theorem 2 in full generality, making no special assumptions on the placement of the two quadrangles other than those declared in the statement of the theorem. Of course, if such simplifying assumptions are made, simplified proofs are possible. For example, if we assume that the two quadrangles have distinct homologous sides, then the proof of Theorem 2 offered above goes through without the need to consider the complementary case. If we assume instead that the two quadrangles have distinct homologous vertices then again the proof of Theorem 2 goes through without the complementary case (which involves coincident homologous vertices).  \\

An alternative approach to Theorem 2 is of independent interest, so we offer it here. As our original approach was completely general, we shall feel free to make a simplifying assumption of general position (announced in italics below) and leave consideration of the complementary case and incident issues as an exercise for the reader. We use the theorem of Desargues and its converse, pertaining to perspective triangles: see [2] Chapter II Theorems 1 and 1$'$; also [2] Chapter 2 Theorems 2.32 and 2.31. \\

As the triangles $P_1 Q_1 R_1$ and $P_2 Q_2 R_2$ are perspective from a point (namely, $O$) they are (Desargues) perspective from a line: that is, the pairwise intersections $Q_1 R_1 \cdot Q_2 R_2$, $R_1 P_1 \cdot R_2 P_2$, $P_1 Q_1 \cdot P_2 Q_2$ of homologous sides lie on a line, say $s$. Dropping instead the points $R, Q, P$ from the quadrangles leads similarly to lines $r, q, p$ on which lie intersections as follows: \\

($s$) \; \; $Q_1 R_1 \cdot Q_2 R_2$ \; \; $R_1 P_1 \cdot R_2 P_2$ \; \; $P_1 Q_1 \cdot P_2 Q_2$ \\

($r$) \; \; $P_1 Q_1 \cdot P_2 Q_2$ \; \; $Q_1 S_1 \cdot Q_2 S_2$ \; \; $S_1 P_1 \cdot S_2 P_2$ \\

($q$) \; \; $S_1 P_1 \cdot S_2 P_2$ \; \; $P_1 R_1 \cdot P_2 R_2$ \; \; $R_1 S_1 \cdot R_2 S_2$ \\

($p$) \; \; $R_1 S_1 \cdot R_2 S_2$ \; \; $S_1 Q_1 \cdot S_2 Q_2$ \; \; $Q_1 R_1 \cdot Q_2 R_2$. \\

{\it Simplifying assumption}: six different pairwise intersection points are displayed here. That the intersections be points is of course equivalent to distinctness of homologous sides; that all six be different is then equivalent to distinctness of homologous vertices (non-collinearity again). \\

Now, suppose that $C_1 C_2$ passes through $O$. The triangles $C_1 S_1 P_1$ and $C_2 S_2 P_2$ are perspective from $O$ so (Desargues) the pairwise intersections $S_1 P_1 \cdot S_2 P_2$, $P_1 C_1 \cdot P_2 C_2 = P_1 Q_1 \cdot P_2 Q_2$, $C_1 S_1 \cdot C_2 S_2 = R_1 S_1 \cdot R_2 S_2$ lie on a line; this line shares two points with $r$ and two points with $q$ whence $r = q$. Similarly, perspectivity of $C_1 S_1 Q_1$ and $C_2 S_2 Q_2$ yields $r = p$. All three of the intersections on $s$ now lie on $r = q = p$: namely, $Q_1 R_1 \cdot Q_2 R_2$ on $p$, $R_1 P_1 \cdot R_2 P_2$ on $q$, $P_1 Q_1 \cdot P_2 Q_2$ on $r$. It follows that all four lines coincide: $s = r = q = p =: o$, say. \\

Next, consider the triangles $A_1 P_1 Q_1$ and $A_2 P_2 Q_2$: the pairwise intersections of their homologous sides all lie on the line $o$, so (Desargues, converse) the lines $A_1 A_2$, $P_1 P_2$ and $Q_1 Q_2$ are concurrent; perspectivity of $B_1 P_1 Q_1$ and $B_2 P_2 Q_2$ likewise passes $B_1 B_2$, $P_1 P_2$ and $Q_1 Q_2$ through a point. The point of concurrence is $O$: the possibility $P_1 P_2 = Q_1 Q_2$ may be sidestepped by considering also the triangles with vertices $A P R$ and $B Q R$, for it cannot be  (non-collinearity!) that $P_1 P_2 = Q_1 Q_2 = R_1 R_2$. It follows that $A_1 A_2$ and $B_1 B_2$ also pass through $O$. \\

Construction of a quadrangle $\overline{P} \overline{Q} \overline{R} \overline{S}$ of which $P_1 Q_1 R_1 S_1$ and $P_2 Q_2 R_2 S_2$ are shadows may now proceed somewhat differently, as follows. Choose any plane $\overline{\pi} \neq \pi$ through the line $o$ and choose a point $O_1$ not on either plane. Define $\overline{S} = \overline{\pi} \cdot O_1 S_1$ and define $\overline{P}, \overline{Q}, \overline{R}$ analogously. Planarity of the quadrangle $\overline{P} \overline{Q} \overline{R} \overline{S}$ is plain. As $\overline{S}$ lies on $O_1 S_1$ and $\overline{R}$ lies on $O_1 R_1$, the lines $\overline{S} \overline{R}$ and $S_1 R_1$ meet (on $\overline{\pi} \cdot \pi = o$) necessarily at $S_1 R_1 \cdot o = S_1 R_1 \cdot S_2 R_2$. Concurrence of $\overline{P} \overline{Q}$, $P_1 Q_1$ and $P_2 Q_2$ (and so on) is shown in the same way. As the points $\overline{Q} \overline{R} \cdot Q_2 R_2$, $\overline{R} \overline{P} \cdot R_2 P_2$, $\overline{P} \overline{Q} \cdot P_2 Q_2$ all lie on $o$ it follows (Desargues, converse) that the lines $\overline{P} P_2$, $\overline{Q} Q_2$, $\overline{R} R_2$ all pass through one point, $O_2$ say; $\overline{S} S_2$ clearly passes through the same point. Finally, observe that concurrence of the lines $\overline{S} \overline{R}$, $S_1 R_1$ and $S_2 R_2$ implies (Desargues) collinearity of $O = S_1 S_2 \cdot R_1 R_2$, $O_1 = \overline{S} S_1 \cdot \overline{R} R_1$ and $O_2 = \overline{S} S_2 \cdot \overline{R} R_2$. \\
 
This completes an alternative approach to Theorem 2. Note the additional finding that correctness of depiction may be verified by testing just one homologous pair of diagonal points: if $O$ is collinear with one homologous pair, then $O$ is collinear with each. Note also the finding that the lines $p, q, r, s$ coincide; this common line $o$ meets the sides of $P_1 Q_1 R_1 S_1$ and $P_2 Q_2 R_2 S_2$ (and $\overline{P} \overline{Q} \overline{R} \overline{S}$ indeed) in the points of one and the same quadrangular set. This has a bearing on [2] page 51 exercise 2 and [1] page 22 exercise 2: there it was shown that if two (similarly placed) quadrangles determine the same quadrangular set then their diagonal triangles are perspective; here we have a converse. \\

We close by remarking that our criterion for correctness of depiction (namely, that the diagonal triangles also be perspective) is eminently reasonable on `physical' grounds: if the spatial quadrangle represented by $P_1 Q_1 R_1 S_1$ is planar, then its diagonals $S_1 Q_1$ and $R_1 P_1$ meet in a material point $B_1$ having $B_2$ as shadow; if it is not planar, then the `intersection' $B_1$ is not material and the shadow `intersection' $B_2$ is a trick of the light. 

\bigbreak
\noindent 
{\bf References} 
\bigbreak 
\noindent 
[1] H.S.M. Coxeter, {\it Projective Geometry}, Second Edition, Springer-Verlag (1987). \\
\noindent
[2] O. Veblen and J.W. Young, {\it Projective Geometry}, Volume I, Ginn and Company (1910).

\bigbreak

\newpage 
\noindent 
P. L. Robinson \\
Department of Mathematics \\
University of Florida \\
Gainesville \\
FL 32611

\end{document}